\renewcommand{\epsilon}{\varepsilon}
\renewcommand{\emptyset}{\varnothing}
\newtheorem{theorem}{Theorem}[section]
\newtheorem{corollary}[theorem]{Corollary}
\newtheorem{lemma}[theorem]{Lemma}
\newtheorem{question}[theorem]{Question}
\newtheorem*{Main Theorem}{Main Theorem}
\theoremstyle{definition}
\newtheorem{definition}[theorem]{Definition}
\theoremstyle{remark}
\DeclareMathOperator{\cd}{cd}
\DeclareMathOperator{\gd}{gd}
\newcommand{\mF}{\mathfrak {F}}
\newcommand{\FF}{\mathfrak F}
\newcommand{\GG}{\mathfrak G}
\newcommand{\Z}{\mathbb Z}
\newcommand{\R}{\mathbb R}
\newcommand{\cohom}[3]{H^{{\raise1pt\hbox{$\scriptstyle#1$}}}(#2\>\!,#3)}
\newcommand{\tatecohom}[3]%
  {\widehat H^{{\raise1pt\hbox{$\scriptstyle#1$}}}(#2\>\!,#3)}
\newcommand{\Cohom}[3]%
  {H^{{\raise1pt\hbox{$\scriptstyle#1$}}}\big(#2\>\!,#3\big)}
\newcommand{\Tatecohom}[3]%
  {\widehat H^{{\raise1pt\hbox{$\scriptstyle#1$}}}\big(#2\>\!,#3\big)}
\newcommand{\homol}[3]{H_{{\lower1pt\hbox{$\scriptstyle#1$}}}(#2\>\!,#3)}
\newcommand{\homolog}[2]{H_{{\lower1pt\hbox{$\scriptstyle#1$}}}(#2)}
\newcommand{\epi}{\twoheadrightarrow}
\newcommand{\eg}{{\underline EG}}
\newcommand{\uueg}{{\underline{\underline E}}G}
\newcommand{\EFG}{E_{\frak F}G}
\newcommand{\bs}{\backslash}
\newcommand{\calO}{\mathcal O}
\newcommand{\ModOFG}{\mathop{{\operator@font
Mod\text{-}}\calO_{\frakF}G}}
\newcommand{\OFGMod}{\mathop{\calO_{\frakF}G\text{-}{\operator@font
Mod}}}
\newcommand{\ModOGG}{\mathop{{\operator@font
Mod\text{-}}\calO_{\frakG}G}}
\newcommand{\OGGMod}{\mathop{\calO_{\frakG}G\text{-}{\operator@font
Mod}}}
\newcommand{\All}{\mathfrak{A}ll}
\newcommand{\Ffin}{\frakF_{\operator@font fin}}
\newcommand{\Fvc}{\frakF_{\operator@font vc}}
\newcommand{\Fic}{\frakF_{\operator@font ic}}
\newcommand{\Ffg}{\frakF_{\operator@font fg}}
\newcommand{\Fpc}{\frakF_{\operator@font pc}}
\newcommand{\Fab}{\frakF_{\operator@font ab}}
\newcommand{\Fvpc}{\frakF_{\operator@font vpc}}
\newcommand{\Fvab}{\frakF_{\operator@font vab}}
\renewcommand{\coprod}%
{\mathop{\rotatebox[origin=c]{180}{$\displaystyle\prod$}}\limits}
\title[Classifying spaces for families of subgroups]{On the dimension of classifying spaces for families of abelian subgroups}
\author[G. Corob Cook]{Ged Corob Cook}
\address{Mathematics, University of Southampton, 
Southampton, SO17 1BJ, UK}
\email{ged.corobcook@cantab.net}
\author[V. Moreno]{Victor Moreno}
\address{ Department of Mathematics,
Royal Holloway, University of London, 
Egham, TW20 0EX, UK }
\email{victormglezmoreno@gmail.com}
\author[B. Nucinkis]{Brita Nucinkis}
\address{ Department of Mathematics,
Royal Holloway, University of London, 
Egham, TW20 0EX, UK }
\email{Brita.Nucinkis@rhul.ac.uk}
\author[F. Pasini]{Federico Pasini}
\address{Department of Mathematics and Applications
University of Milano-Bicocca
Via Cozzi, 55
20125 Milano, ITALY}
\email{f.pasini1@campus.unimib.it}
\thanks{The fourth author gratefully acknowledges the support by the Italian \textit{National Group for Algebraic and Geometric Structures and Their Applications} (GNSAGA -- INDAM)}
\subjclass[2000]{55R35, 20J06, 18G99}
\date{\today}
\begin{document}

\begin{abstract} We show that a finitely generated abelian group $G$ of torsion-free rank $n\geq 1$ admits a $n+r$ dimensional model for  $E_{\FF_r} G$, where $\mF_r$ is the family of subgroups of torsion-free rank less than or equal to $r\geq 0$. 

\end{abstract}

\maketitle

\thispagestyle{empty}


\section{Introduction}

In this note we consider classifying spaces $\EFG$ for a family of subgroups $\mF$ of $G$. We are particularly interested in the minimal dimension, denoted  $\gd_\mF G,$ such a space can have. 

Let $G$ be a group. We say a collection of subgroups $\mF$ is a family if it is closed under conjugation and taking subgroups. A $G$-CW-complex $X$ is said to be a classifying space $\EFG$ for the family $\mF$ if, for each subgroup $H \leq G$, $X^H \simeq \{\ast\}$ if $H \in \mF$, and $X^H =\emptyset$ otherwise.

The spaces $\eg= \EFG$ for $\mF = {\mF}in$ the family of finite subgroups and $\uueg =\EFG$ for $\mF=\mathcal{V}cyc$ the family of virtually cyclic subgroups have been widely studied for their connection with the Baum-Connes and Farrell-Jones conjectures respectively. For a first introduction into the subject see, for example, the survey \cite{luecksurvey}.

We consider finitely generated abelian groups $G$ of finite torsion-free rank $r_0(G)=n$ and families $\FF_r$  of subgroups of torsion-free rank less or equal to $r<n.$
Note that for $r=0,$  $\FF_0={\FF}in$ and that it is a well known fact, see for example \cite{luecksurvey}, that $\R^n$ is a model for $\eg$ and that $\gd_{\FF_0}G=n.$ 
For $r=1,$ $\FF_1={\mathcal{V}}cyc$ and it was shown in \cite[Proposition 5.13(iii)]{lueckweiermann} that $\gd_{\FF_1}G=n+1.$

The main idea is to use the method developed by L\"uck and Weiermann \cite{lueckweiermann} to build models of $E_{\FF_r}G$ from models for $E_{\FF_{r-1}}G.$
We begin by recalling those results in \cite{lueckweiermann} that we need for our construction.  
Let $\FF$ and $\GG$ families of subgroups of a given group $G$ such that $\FF \subseteq \GG$.

\begin{definition}\label{equiv}\cite[(2.1)]{lueckweiermann}
Let $\FF$ and $\GG$ be families of subgroups of a given group $G$ such that $\FF \subseteq \GG$. Let $\sim$ be an equivalence relation on $\GG\backslash\FF$ satisfying:
\begin{itemize} 
\item For $H,K \in \GG\backslash\FF$ with $H \leq K$ we have $H \sim K.$
\item Let $H,K \in \GG\bs\FF$ and $g \in G$, then $H \sim K \iff gHg^{-1} \sim gKg^{-1}.$
\end{itemize}
Such a relation is called a strong equivalence relation.
Denote by $[\GG\bs\FF] $ the equivalence classes of $\sim$ and define for all $[H]\in [\GG\bs\FF]$ the following subgroup of $G$:
$$N_G[H] =\{g \in G\,|\, [gHg^{-1}]=[H]\}.$$
Now define a family of subgroups of $N_G[H]$ by
$$\GG[H] =\{K \leq N_G[H] \,|\, K \in \GG\bs\FF \, ,\, [K]=[H]\}\cup (\FF \cap N_G[H]).$$
Here $\FF \cap N_G[H]$ is the family of subgroups of of $N_G[H]$ belonging to $\FF$.
\end{definition}

\begin{theorem}\label{lw-main}\cite[Theorem 2.3]{lueckweiermann} Let $\FF \subseteq \GG$ and $\sim$ be as in Definition \ref{equiv}. Denote by $I$ a complete set of representatives of the conjugacy classes in $[\GG\bs\FF].$ Then the $G$-CW-complex given by the cellular $G$ push-out
$$\xymatrix{\sqcup_{[H]\in I} G \times_{N_G[H]}E_{\FF \cap N_G[H]}(N_G[H])  \ar[r]^{\phantom{ppppppppppppp}i} \ar[d]_{\sqcup_{[H]\in I} id_G \times_{N_G[H]}f_{[H]}}  & E_{\FF}(G) \ar[d] \\
\sqcup_{[H]\in I} G \times_{N_G[H]}E_{\GG[H]}(N_G[H]) \ar[r]  &X}$$
where either $i$ or the $f_{[H]}$ are inclusions, is a model for $E_\GG(G).$
\end{theorem}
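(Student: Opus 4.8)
The plan is to check directly that the push-out $X$ exhibits the fixed-point behaviour characterising $E_\GG(G)$, namely $X^K\simeq *$ for $K\in\GG$ and $X^K=\emptyset$ for $K\notin\GG$. First I would observe that the square is a cellular $G$ push-out in which one of the two maps out of the top-left corner is an inclusion of $G$-CW-complexes; hence $X$ is a $G$-CW-complex, and for each subgroup $K\leq G$ the $K$-fixed-point functor carries the square to a push-out of spaces one of whose legs is still a cofibration, so by the gluing lemma $X^K$ is the homotopy push-out of the $K$-fixed points of the three remaining corners. Computing those then comes down, via the standard identification $(G\times_L Y)^K\cong\bigsqcup Y^{g^{-1}Kg}$ (union over cosets $gL\in G/L$ with $g^{-1}Kg\leq L$), to deciding which conjugates of $K$ lie in the families $\FF$, $\FF\cap N_G[H]$ and $\GG[H]$.

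I would then split into three cases. If $K\notin\GG$, no conjugate of $K$ lies in any of those families, so all three corners have empty $K$-fixed points and $X^K=\emptyset$. If $K\in\FF$, then $E_\FF(G)^K\simeq *$, and the key point is that $g^{-1}Kg\leq N_G[H]$ automatically forces $g^{-1}Kg\in\FF\cap N_G[H]\subseteq\GG[H]$; hence the two left-hand corners have $K$-fixed points that are disjoint unions of contractible spaces over the \emph{same} indexing set of cosets, the left vertical map is a weak equivalence, and the gluing lemma gives $X^K\simeq E_\FF(G)^K\simeq *$. The substantive case is $K\in\GG\setminus\FF$, where $[K]$ is defined, say $[K]=[H_0]$ with $H_0\in I$: here $E_\FF(G)^K=\emptyset$ and, since every conjugate of $K$ still lies in $\GG\setminus\FF$ and therefore in no $\FF\cap N_G[H]$, the top-left corner also has empty $K$-fixed points, so $X^K$ is exactly the $K$-fixed point set of the bottom-left corner; I would show this is a single point. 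A coset $gN_G[H]$ contributes $E_{\GG[H]}(N_G[H])^{g^{-1}Kg}$, which is nonempty precisely when $g^{-1}Kg\in\GG[H]$, i.e.\ (as $g^{-1}Kg\in\GG\setminus\FF$) precisely when $[g^{-1}Kg]=[H]$; since $[g^{-1}Kg]$ is conjugate to $[H_0]$, this forces $[H]$ conjugate to $[H_0]$, hence $H=H_0$, and then conjugation-equivariance of $\sim$ forces $g\in N_G[H_0]$, so only the trivial coset survives and $X^K=E_{\GG[H_0]}(N_G[H_0])^K\simeq *$.

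The step I expect to be the main obstacle is the bookkeeping in this last case: matching the index sets arising from the coset decomposition and collapsing them to the trivial coset, using both axioms of a strong equivalence relation (that $H\leq K$ in $\GG\setminus\FF$ implies $H\sim K$, and that conjugation respects $\sim$). In particular one has to be sure that $K$ itself lies in $\GG[H_0]$, i.e.\ that $K\leq N_G[H_0]$; this holds because any $k\in K$ normalises $K$ and so fixes the class $[K]=[H_0]$ under conjugation, whence $k\in N_G[H_0]$. The remaining ingredients — that a cellular push-out of $G$-CW-complexes is a $G$-CW-complex, the fixed-point formula for induced spaces, and the gluing lemma for homotopy push-outs — are standard equivariant homotopy theory, and I would merely cite them.
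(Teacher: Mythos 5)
Your argument is correct. Note that the paper itself gives no proof of this statement: it is imported verbatim as \cite[Theorem 2.3]{lueckweiermann}, so there is nothing internal to compare against. What you have written is essentially the proof from that source: one checks that the $K$-fixed points of the push-out are contractible for $K\in\GG$ and empty otherwise, using the cofibration hypothesis to commute $(-)^K$ with the push-out, the formula $(G\times_L Y)^K\cong\bigsqcup_{gL,\ g^{-1}Kg\leq L}Y^{g^{-1}Kg}$, and the gluing lemma. Your three-case analysis is complete, and you correctly isolate the two places where the axioms of a strong equivalence relation are actually used: conjugation-equivariance to collapse the coset sum to the single term $H=H_0$, $g\in N_G[H_0]$, and the subgroup axiom ($H\leq K$ implies $H\sim K$, applied to $K\leq K$ normalised by its own elements) to guarantee $K\leq N_G[H_0]$ and hence $K\in\GG[H_0]$, so that the surviving term is contractible.
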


The condition on the two maps being inclusions is not that strong a restriction, as one can replace the spaces by the mapping cylinders, see \cite[Remark 2.5]{lueckweiermann}. Hence one has:

\begin{corollary}\label{lw-dim}\cite[Remark 2.5]{lueckweiermann}
Suppose there exists an $n$-dimensional model for $E_{\FF}G$ and, for each $H \in I$, a $(n-1)$-dimensional model for $E_{\FF \cap N_G[H]}(N_G[H])$ and a $n$-dimensional model for $E_{\GG[H]}(N_G[H])$. Then there is an $n$-dimensional model for $E_\GG G.$
\end{corollary}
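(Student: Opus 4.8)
The plan is to read this off Theorem~\ref{lw-main} by a dimension count on the cellular $G$-push-out it produces, using the mapping cylinder observation recorded just before the corollary. So I would start by applying Theorem~\ref{lw-main} to the given pair $\FF\subseteq\GG$ and strong equivalence relation $\sim$, obtaining a cellular $G$-push-out whose bottom-right corner $X$ is a model for $E_\GG(G)$. Write $A=\sqcup_{[H]\in I}G\times_{N_G[H]}E_{\FF\cap N_G[H]}(N_G[H])$ for the top-left corner, $B=E_\FF(G)$ for the top-right corner, and $C=\sqcup_{[H]\in I}G\times_{N_G[H]}E_{\GG[H]}(N_G[H])$ for the bottom-left corner, with horizontal map $i\colon A\to B$ and vertical map $f=\sqcup f_{[H]}\colon A\to C$. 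By the remark preceding the corollary, after replacing $B$ (or $C$) by a suitable mapping cylinder we may assume $i$ (or $f$) is a cellular inclusion of $G$-CW-complexes without changing any $G$-homotopy type, so that $X$ is literally the union of the two adjacent corners along $A$, obtained by attaching the cells of $C$ (resp.\ $B$) not already in $A$, each in its own dimension.

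Next I would do the bookkeeping. Induction $G\times_{N_G[H]}(-)$ along a subgroup sends an $N_G[H]$-CW-complex of dimension $d$ to a $G$-CW-complex of dimension $d$, and a disjoint union has the dimension of its largest summand; hence from the hypotheses $\dim A\le n-1$, $\dim B\le n$ and $\dim C\le n$. Forming the mapping cylinder of $i\colon A\to B$ replaces $B$ by a $G$-CW-complex containing $A$ as a subcomplex and of dimension $\le\max(\dim A+1,\dim B)\le\max(n,n)=n$; then $X$ is this cylinder with cells of $C$ of dimension $\le n$ attached along $A$, so $\dim X\le n$. Symmetrically, if one instead arranges the $f_{[H]}$ to be inclusions, one replaces $C$ by its mapping cylinder, of dimension $\le\max(\dim A+1,\dim C)\le n$, and glues on $B$, again getting $\dim X\le n$. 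Since $X$ is a model for $E_\GG(G)$, this yields $\gd_\GG G\le n$, which is exactly the assertion.

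The whole argument is elementary, and the only point that genuinely needs care is the interaction of the mapping cylinder with the dimension bound: passing from a map out of a $d$-dimensional complex to a cofibration via the mapping cylinder costs one dimension, which is precisely why the hypothesis asks for $(n-1)$-dimensional models of $E_{\FF\cap N_G[H]}(N_G[H])$ rather than merely $n$-dimensional ones — with $n$-dimensional models there, the cylinder, and hence $X$, would only be bounded by $n+1$. The remaining facts used — that induction and disjoint unions behave as stated on dimension, and that a cellular push-out along a subcomplex inclusion introduces no cells beyond those of the two given complexes — are routine and I would not belabour them.
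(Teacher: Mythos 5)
Your proposal is correct and is essentially the argument the paper has in mind: the paper itself offers no separate proof, deriving the corollary directly from Theorem~\ref{lw-main} together with the mapping-cylinder observation of \cite[Remark 2.5]{lueckweiermann}, which is exactly the dimension count you carry out. Your explicit bookkeeping, in particular the remark that the cylinder over $A$ costs one dimension and that this is why the hypothesis demands $(n-1)$-dimensional models for $E_{\FF\cap N_G[H]}(N_G[H])$, is a faithful and complete expansion of that citation.
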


Corollary \ref{lw-dim} gives us  a tool to find an upper bound for $\gd_\GG G$.  A very useful tool to find a lower bound for $\gd_\GG G$ is the following Mayer-Vietoris sequence \cite{lueckalgtop}, which is an immediate consequence of Theorem \ref{lw-main}, see also \cite[Proposition 7.1]{degrijsepetrosyan} for the Bredon-cohomology version.

\begin{corollary}\label{MV}  With the notation as in Theorem \ref{lw-main} we have following long exact cohomology sequence:
$$... \to H^i(G\backslash E_\GG G) \to (\prod_{[H]\in I} H^i(N_G[H]\backslash E_{\GG[H]}N_G[H]) )\oplus  H^i(G\backslash E_\FF G) \to $$
$$\prod_{[H]\in I} H^i(N_G[H]\backslash E_{\FF\cap N_G[H]}N_G[H])
 \to H^{i+1} (G\backslash E_\GG G) \to ...$$

\end{corollary}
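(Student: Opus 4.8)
The plan is to obtain the sequence by applying the orbit functor $G\backslash(-)$ to the cellular $G$-push-out of Theorem \ref{lw-main} and then invoking the classical Mayer--Vietoris sequence of a homotopy push-out. Since $G\backslash(-)$ is left adjoint to the functor equipping a space with the trivial $G$-action, it preserves push-outs; and, restricted to $G$-CW-complexes, it carries cellular $G$-inclusions to cellular inclusions of CW-complexes, hence to cofibrations. Thus applying $G\backslash(-)$ to the square of Theorem \ref{lw-main} yields a push-out square of CW-complexes in which one leg of the defining span is a cofibration, i.e. a homotopy push-out.

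To identify the corners I would use the elementary homeomorphism $G\backslash(G\times_{L}Y)\cong L\backslash Y$, valid for any subgroup $L\le G$ and any $L$-CW-complex $Y$, together with $G\backslash\big(\sqcup_{[H]\in I}(-)\big)=\sqcup_{[H]\in I}G\backslash(-)$. This turns the top-left corner into $\sqcup_{[H]\in I}N_G[H]\backslash E_{\FF\cap N_G[H]}(N_G[H])$, the bottom-left corner into $\sqcup_{[H]\in I}N_G[H]\backslash E_{\GG[H]}(N_G[H])$, the top-right corner into $G\backslash E_\FF G$, and the bottom-right corner into $G\backslash X$. Since $X$ is a model for $E_\GG G$ it is $G$-homotopy equivalent to any other model, so $G\backslash X$ is homotopy equivalent to $G\backslash E_\GG G$ and may be replaced by it.

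Now the Mayer--Vietoris sequence in ordinary (say cellular) cohomology of this homotopy push-out gives, writing $B=G\backslash E_\GG G$, $A_1=\sqcup_{[H]}N_G[H]\backslash E_{\GG[H]}(N_G[H])$, $A_2=G\backslash E_\FF G$ and $A_{12}=\sqcup_{[H]}N_G[H]\backslash E_{\FF\cap N_G[H]}(N_G[H])$, the long exact sequence $\cdots\to H^i(B)\to H^i(A_1)\oplus H^i(A_2)\to H^i(A_{12})\to H^{i+1}(B)\to\cdots$. Rewriting the cohomology of the disjoint unions as products over $I$ via $H^i\big(\sqcup_{[H]\in I}Z_{[H]}\big)=\prod_{[H]\in I}H^i(Z_{[H]})$ produces precisely the stated sequence.

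The main (and essentially only non-formal) obstacle is the passage through the quotient functor: one must check that $G\backslash(-)$ preserves the push-out and sends the distinguished cellular $G$-inclusion to a cofibration, so that the quotient square is still a homotopy push-out. Both facts are standard for $G$-CW-complexes — the first because $G\backslash(-)$ is a left adjoint, the second because a cellular $G$-inclusion induces a cellular inclusion of orbit complexes — and once they are in place the remaining steps (the identification $G\backslash(G\times_L Y)\cong L\backslash Y$, the homotopy invariance $G\backslash X\simeq G\backslash E_\GG G$, and the Mayer--Vietoris sequence of a homotopy push-out) are routine.
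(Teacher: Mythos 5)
Your argument is correct and is exactly the route the paper intends: it presents the corollary as an immediate consequence of Theorem \ref{lw-main}, i.e.\ the Mayer--Vietoris sequence of the (homotopy) push-out after passing to orbit spaces, with the corners identified via $G\backslash(G\times_{N_G[H]}Y)\cong N_G[H]\backslash Y$. Your added care about $G\backslash(-)$ preserving the push-out and sending the cellular $G$-inclusion to a cofibration is precisely the standard justification the paper leaves implicit.
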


This note will be devoted to proving the following Theorem:

\begin{Main Theorem}\label{main} Let $G$ be a finitely generated abelian group of finite torsion-free rank $n\geq 1$, and denote by $\FF_r$ the family of subgroups of torsion-free rank less than or equal to $r\geq 0$.
Then
$$ \gd_{\FF_r} G \leq n+r.$$
\end{Main Theorem}

The case for more general classes of groups $G$ is going to be dealt with, using different methods, by the second author in his Ph.D thesis.

\section{The Construction}

Throughout, let $G$ denote a finitely generated abelian group of torsion-free rank $r_0(G)=n$.

The idea is to construct models for $E_{\FF_r} G$ in terms of models for $E_{\FF_{r-1}} G$  using the push-out of Theorem \ref{lw-main} inductively.
As a first step we shall define an equivalence relation in the sense of Definition \ref{equiv}.

\begin{lemma}
Let $\sim$ denote the following relation on $\FF_r \backslash \FF_{r-1}:$ 
$$H \sim K \iff rk (H\cap K)=r.$$
Then
 $\sim$ is a strong equivalence relation.
 \end{lemma}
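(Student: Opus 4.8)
The relation $\sim$ on $\FF_r \setminus \FF_{r-1}$ is defined by $H \sim K \iff \operatorname{rk}(H \cap K) = r$. Note first that $H \in \FF_r \setminus \FF_{r-1}$ means exactly $\operatorname{rk}(H) = r$, so for such $H$ we always have $\operatorname{rk}(H \cap H) = \operatorname{rk}(H) = r$, giving reflexivity. Symmetry is immediate from the symmetry of intersection in the definition. So the plan is: the substantive content is (1) transitivity, (2) the first bullet of Definition \ref{equiv} (the monotonicity condition $H \leq K \implies H \sim K$), and (3) the conjugation-invariance bullet.

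The conjugation bullet is trivial here because $G$ is abelian: $gHg^{-1} = H$ for all $g$, so $H \sim K \iff gHg^{-1} \sim gKg^{-1}$ holds vacuously. For the monotonicity bullet, if $H \leq K$ with $H, K \in \FF_r \setminus \FF_{r-1}$, then $\operatorname{rk}(H) = \operatorname{rk}(K) = r$ and $H \cap K = H$, so $\operatorname{rk}(H \cap K) = r$ and $H \sim K$. These two are one-liners.

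The heart of the argument is transitivity: assuming $\operatorname{rk}(H \cap K) = r = \operatorname{rk}(K \cap L)$ with all three subgroups of rank exactly $r$, I must show $\operatorname{rk}(H \cap L) = r$. The key point is that $H \cap K$ and $K \cap L$ are both rank-$r$ subgroups of the rank-$r$ group $K$, hence each has finite index in $K$; therefore their intersection $H \cap K \cap L$ also has finite index in $K$, so $\operatorname{rk}(H \cap K \cap L) = r$. Since $H \cap K \cap L \leq H \cap L$, monotonicity of torsion-free rank forces $\operatorname{rk}(H \cap L) \geq r$; but $H \cap L \leq H$ gives $\operatorname{rk}(H \cap L) \leq r$, so equality holds. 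The facts I am using are standard properties of torsion-free rank for abelian groups: a subgroup of a finite-rank abelian group has rank equal to the ambient rank iff it has finite index (equivalently, the quotient is torsion), rank is monotone under inclusion, and it is additive on short exact sequences (which I would invoke to handle the "finite index" characterization cleanly, or one can argue directly that if $A, B \leq K$ both have torsion quotient then so does $A \cap B$, since $K/(A\cap B)$ embeds in $K/A \times K/B$).

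The main (and really only) obstacle is making the finite-index/torsion-quotient bookkeeping precise for finitely generated abelian groups of finite torsion-free rank — in particular being careful that "rank $r$ subgroup of rank $r$ group" genuinely gives finite index, which for finitely generated abelian groups follows from the structure theorem but I would state it as a small standalone observation. Everything else is formal. I expect the whole proof to be under half a page.
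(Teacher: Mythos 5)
Your proposal is correct and follows essentially the same route as the paper: the paper also reduces transitivity to the observation that $H\cap K$ and $K\cap L$ have finite index in $K$, hence so does $H\cap K\cap L$, forcing $H\cap L$ to have finite index in $H$ and $L$; it then dismisses the remaining axioms as easily checked, exactly the parts you verify explicitly (reflexivity, symmetry, monotonicity, and conjugation-invariance via commutativity of $G$). No issues.
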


\begin{proof} We show that $\sim$ is transitive: If $H \sim K$ and $K \sim L$, this implies that both $H\cap K$ and $K\cap L$ are finite index subgroups of $K$. Hence also $H\cap K \cap L$ is a finite index subgroup of $K$, and in particular of $K\cap L$ and thus of $L$. Hence $H\cap L$ is finite index in both $H$ and $L$.
The rest is easily checked.
\end{proof}

\begin{lemma}\label{unimax} $G$ satisfies $(M_{\FF_{r-1} \subseteq \FF_r})$, i.e. every subgroup $H \in \FF_{r}\bs\FF_{r-1}$ is contained in a unique $H_{max} \in \FF_{r}\bs\FF_{r-1}$, which is maximal.
\end{lemma}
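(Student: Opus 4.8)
The plan is to exhibit the maximal overgroup explicitly as the \emph{isolator} of $H$ in $G$: writing $G$ additively (it is abelian), set
$$H_{max} \;=\; \{\, g \in G \mid mg \in H \text{ for some } m \geq 1 \,\},$$
equivalently the preimage in $G$ of the torsion subgroup $T(G/H)$ of $G/H$ under the quotient map $G \to G/H$. Throughout I would use without further comment that $G$ is abelian, so every subgroup is normal, and that $G$ is finitely generated, so that all its subgroups and quotients are finitely generated abelian groups.

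First I would record the only input on ranks that is needed: for subgroups $H \leq K \leq G$, tensoring $0 \to H \to K \to K/H \to 0$ with the flat $\Z$-module $\Q$ gives $r_0(K) = r_0(H) + r_0(K/H)$. Since $K/H$ is finitely generated abelian, $r_0(K/H) = 0$ if and only if $K/H$ is torsion, if and only if $K/H$ is finite. Hence, for our $H$ (which has $r_0(H) = r$, as $H \in \FF_r \bs \FF_{r-1}$), a subgroup $K$ with $H \leq K \leq G$ lies in $\FF_r \bs \FF_{r-1}$ precisely when $[K:H] < \infty$. By the correspondence theorem these $K$ are exactly the preimages of the finite subgroups of $G/H$. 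Every finite subgroup of $G/H$ is contained in $T(G/H)$, which is itself finite because $G/H$ is finitely generated abelian; so $T(G/H)$ is the unique maximal finite subgroup of $G/H$, and its preimage is exactly $H_{max}$ as above. In particular $H_{max}/H = T(G/H)$ is finite, whence $r_0(H_{max}) = r_0(H) = r$ and $H_{max} \in \FF_r \bs \FF_{r-1}$.

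Finally I would observe that the previous paragraph already contains the whole statement: every $K \in \FF_r \bs \FF_{r-1}$ with $H \leq K$ satisfies $K \leq H_{max}$, so $H_{max}$ is maximal in $\FF_r \bs \FF_{r-1}$, and if $M \in \FF_r \bs \FF_{r-1}$ is any maximal subgroup containing $H$, then $M \leq H_{max}$ forces $M = H_{max}$. I do not expect a genuine obstacle here; the content is simply that, in the finitely generated setting, ``having the same torsion-free rank as $H$'' is the same as ``being finite over $H$'', after which the unique maximal such subgroup is visibly the preimage of $T(G/H)$. The one point to keep in mind is the finite-generation hypothesis, which is exactly what makes ``torsion'' coincide with ``finite'' for the relevant quotients, and hence makes $T(G/H)$ finite.
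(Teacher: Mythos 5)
Your proof is correct, but it takes a genuinely different route from the paper. You construct $H_{max}$ explicitly as the isolator of $H$, i.e.\ the preimage in $G$ of the torsion subgroup $T(G/H)$, after reducing ``same torsion-free rank as $H$'' to ``finite index over $H$'' via rank additivity; this makes the proof entirely self-contained and in fact yields more than the statement asks for, namely a concrete description of $H_{max}$ and the fact that it is the \emph{maximum} (not merely a unique maximal element) among the subgroups of $\FF_r\bs\FF_{r-1}$ containing $H$. The paper instead quotes Segal's book on polycyclic groups for the mere \emph{existence} of maximal elements (which holds in that greater generality) and then proves uniqueness by contradiction: if $K,L$ were two distinct maximal elements over $H$, the product $KL$ would again lie in $\FF_r\bs\FF_{r-1}$ because $|KL:K|=|L:K\cap L|\leq|L:H|<\infty$, contradicting maximality. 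Both arguments use commutativity in an essential way -- yours to ensure the isolator is a subgroup, the paper's to ensure $KL$ is a subgroup -- so neither generalises directly to the Klein-bottle example the paper discusses afterwards. Your version buys explicitness and independence from the reference; the paper's buys brevity and an existence argument that works for all polycyclic ambient groups.
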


\begin{proof} The existence follows from \cite{segal}. As regards uniqueness, suppose $H$ is included in two different maximal elements $K,L\in\FF_{r}\bs\FF_{r-1}$: then $H\leq KL$.
Note that, since $H \sim L$ and $H \leq L$, it follows that $|L\colon H|<\infty$.
Hence 
$$|KL\colon K|=|L\colon K\cap L|\leq |L\colon H|<\infty$$
implies $KL\in\FF_{r}\bs\FF_{r-1}$, contrary to the maximality of $K$ and $L$.
\end{proof}

Note that we always have maximal elements in $\FF_{r}\bs\FF_{r-1}$ as long as the ambient group is polycyclic \cite{segal}, but uniqueness already fails for the Klein-bottle group $K$, which is non-abelian but contains a free abelian subgroup of rank $2$ as an index $2$ subgroup. Denote 
$$K=\langle a,b \,|\, aba^{-1}=b^{-1}\rangle$$
and consider $\FF_1$ the family of cyclic subgroups. Since $a^2=(ab^{-1})^2$, in follows that $\langle a^2 \rangle \leq \langle ab^{-1} \rangle$ as well as $\langle a^2 \rangle \leq \langle a \rangle$, both of which are maximal.

For $M \leq G$ a subgroup of $G$ we denote by ${\All(M)}$ the family of all subgroups of $M$.

\begin{lemma}\label{eallm}
Let $M$ be a maximal subgroup of $G$ of torsion-free rank $r$. Then $\R^{n-r}$ is a model for $E_{\All(M)}G,$ and $\gd_{\All(M)}G = n-r.$
\end{lemma}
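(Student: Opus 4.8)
The plan is to exhibit $\R^{n-r}$, equipped with a suitable $G$-action induced from the quotient $G/M$, as a model for $E_{\All(M)}G$, and then to obtain the matching lower bound by passing to orbit spaces. The first step, and in my view the conceptual crux (even though the argument is short), is to identify $G/M$. Torsion-free rank is additive along short exact sequences, so $r_0(G/M)=r_0(G)-r_0(M)=n-r$; since $G$ is finitely generated abelian we may write $G/M\isom\Z^{n-r}\times F$ with $F$ finite. If $F$ were nontrivial, the preimage in $G$ of $F$ would be a subgroup strictly containing $M$ and still of torsion-free rank $r$, i.e.\ an element of $\FF_{r}\setminus\FF_{r-1}$ lying properly above $M$, contradicting the maximality of $M$ (Lemma~\ref{unimax}). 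Hence $G/M\isom\Z^{n-r}$; this is the only place where the hypothesis on $M$ is used.

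For the upper bound I would let $G$ act on $\R^{n-r}$ through the projection $\pi\colon G\epi G/M\isom\Z^{n-r}$, with $\Z^{n-r}$ acting by translations. Inflating the standard free cocompact $\Z^{n-r}$-CW-structure on $\R^{n-r}$ along $\pi$ turns $\R^{n-r}$ into a $G$-CW-complex of dimension $n-r$, all of whose cells have stabiliser $M=\ker\pi$ (checking that this inflation is a legitimate $G$-CW-structure is the only, mild, technical point). Now compute fixed-point sets directly: if $H\in\All(M)$, i.e.\ $H\leq M$, then $H$ acts trivially and $(\R^{n-r})^{H}=\R^{n-r}\simeq\{\ast\}$; if $H\notin\All(M)$, then $\pi(H)\neq 1$ acts by nontrivial translations, which is fixed-point free, so $(\R^{n-r})^{H}=\emptyset$. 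Thus $\R^{n-r}$ is a model for $E_{\All(M)}G$, giving $\gd_{\All(M)}G\leq n-r$.

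For the lower bound, let $Y$ be any model for $E_{\All(M)}G$. By uniqueness of classifying spaces up to $G$-homotopy, $Y$ is $G$-homotopy equivalent to $\R^{n-r}$, and a $G$-homotopy equivalence descends to a homotopy equivalence of orbit spaces $G\backslash Y\simeq G\backslash\R^{n-r}=\Z^{n-r}\backslash\R^{n-r}$, the $(n-r)$-dimensional torus. Since $G\backslash Y$ is a CW-complex of dimension at most $\dim Y$ while $H^{n-r}(\Z^{n-r}\backslash\R^{n-r};\Z)\neq 0$, we get $\dim Y\geq n-r$, hence $\gd_{\All(M)}G\geq n-r$. (Equivalently, inflation along $\pi$ identifies $E_{\All(M)}G$ with the $G$-space obtained from $E\Z^{n-r}$, so the bound also follows from $\gd_{\FF_{0}}\Z^{n-r}=n-r$ recalled in the introduction.) Combining this with the previous step yields $\gd_{\All(M)}G=n-r$. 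I do not expect any real obstacle here; the only genuinely content-bearing step is the identification $G/M\isom\Z^{n-r}$ from maximality, and the rest is routine.
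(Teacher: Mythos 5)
Your proof is correct and takes essentially the same route as the paper: the paper likewise deduces from maximality that $G/M$ is torsion-free of rank $n-r$ and lets $G$ act on $\R^{n-r}$ through the projection $G\epi G/M$. You merely make explicit the details the paper leaves implicit (why the torsion of $G/M$ must vanish, the fixed-point computation, and the lower bound via the top cohomology of the orbit torus).
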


\begin{proof} Since $M$ is maximal it follows that $G/M$ is torsion-free of rank $n-r$ and hence $\R^{n-r}$ is a model for $E(G/M).$ The action of $G$ given by the projection $G \epi G/M$ now yields the claim.
\end{proof}

\begin{lemma}\label{federicovictor}
Let $\FF$ and $\GG$ be two families of subgroups of $G$. Then 
$$\gd_{\FF\cup\GG}G \leq \max\{\gd_\FF G, \gd_\GG G, \gd_{\FF\cap\GG}G +1\}.$$
\end{lemma}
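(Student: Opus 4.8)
The plan is to exhibit $E_{\FF\cup\GG}G$ as a $G$-pushout of $E_\FF G$ and $E_\GG G$ glued along $E_{\FF\cap\GG}G$, and then to estimate the dimension of that pushout, using a mapping cylinder to turn one of the two structure maps into an inclusion of $G$-CW-complexes. (The purely dimension-blind statement ``$E_\FF G\ast E_\GG G$ is a model for $E_{\FF\cup\GG}G$'' is true but useless here, since a join adds the dimensions; the pushout is what keeps the dimension under control.)

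We may assume that the families are non-empty, so that $\FF\cap\GG$ --- again closed under conjugation and subgroups --- contains the trivial subgroup and hence admits a classifying space, and that $\gd_\FF G$, $\gd_\GG G$ and $\gd_{\FF\cap\GG}G$ are all finite, as otherwise the right-hand side is infinite. Fix models $E_\FF G$, $E_\GG G$, $E_{\FF\cap\GG}G$ realising these minimal dimensions. As $\FF\cap\GG\subseteq\FF$ and $\FF\cap\GG\subseteq\GG$ there are $G$-maps $f\colon E_{\FF\cap\GG}G\to E_\FF G$ and $g\colon E_{\FF\cap\GG}G\to E_\GG G$, each unique up to $G$-homotopy; take $g$ cellular and replace $E_\FF G$ by the $G$-mapping cylinder $\operatorname{cyl}(f)$. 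This is again a model for $E_\FF G$: for every $H\le G$ one has $\operatorname{cyl}(f)^H=\operatorname{cyl}(f^H)$, which is homotopy equivalent to $(E_\FF G)^H$ --- and in the degenerate case $(E_{\FF\cap\GG}G)^H=\emptyset$ one simply has $\operatorname{cyl}(f^H)=(E_\FF G)^H$. Moreover $\dim\operatorname{cyl}(f)\le\max\{\gd_{\FF\cap\GG}G+1,\ \gd_\FF G\}$ and the inclusion $j\colon E_{\FF\cap\GG}G\hookrightarrow\operatorname{cyl}(f)$ of the top of the cylinder is an inclusion of $G$-CW-complexes.

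Now let $X$ be the $G$-pushout of the two $G$-maps $j\colon E_{\FF\cap\GG}G\hookrightarrow\operatorname{cyl}(f)$ and $g\colon E_{\FF\cap\GG}G\to E_\GG G$. Since $j$ is a $G$-cofibration, for every $H\le G$ the fixed-point functor $(-)^H$ sends this to a pushout of spaces, so $X^H$ is the pushout of $\operatorname{cyl}(f)^H\leftarrow(E_{\FF\cap\GG}G)^H\to(E_\GG G)^H$. Running through the four cases according to whether $H$ lies in $\FF\cap\GG$, in $\FF\setminus\GG$, in $\GG\setminus\FF$, or outside $\FF\cup\GG$, one reads off that $X^H$ is contractible in the first three cases (using in the first case that a pushout of an acyclic cofibration is an acyclic cofibration) and empty in the last, so $X$ is a model for $E_{\FF\cup\GG}G$. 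Finally, as a $G$-CW-complex $X$ is obtained from $E_\GG G$ by attaching the cells of $\operatorname{cyl}(f)$ that do not lie in $E_{\FF\cap\GG}G$, whence
\[
\gd_{\FF\cup\GG}G\le\dim X\le\max\bigl\{\gd_\GG G,\ \dim\operatorname{cyl}(f)\bigr\}\le\max\bigl\{\gd_\FF G,\ \gd_\GG G,\ \gd_{\FF\cap\GG}G+1\bigr\}.
\]

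The only genuinely delicate point is the second paragraph: one must check that the mapping cylinder really is an \emph{equivariant} model for $E_\FF G$ (i.e.\ fixed points commute with forming the cylinder) and that at least one leg of the pushout is a bona fide $G$-CW inclusion, which is exactly what legitimises the fixed-point bookkeeping in the third paragraph; the degenerate subcases where some fixed-point set is empty are what make this fiddly rather than formal. Once that is in place, the dimension count is immediate.
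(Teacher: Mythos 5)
Your argument is correct and is essentially the paper's own proof: the pushout of $E_\GG G \leftarrow E_{\FF\cap\GG}G \hookrightarrow \operatorname{cyl}(f)$ you build is exactly the double mapping cylinder of the two canonical maps $E_\FF G \leftarrow E_{\FF\cap\GG}G \to E_\GG G$ that the paper invokes. You have merely spelled out the fixed-point verification and the dimension count that the paper leaves implicit.
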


\begin{proof} 
By the universal property of classifying spaces for families, there are maps, unique up to $G$-homotopy, $E_{\FF\cap\GG}G \to E_{\GG}G$ and $E_{\FF\cap\GG}G \to E_\FF G.$ Now the double mapping cylinder yields a model for $E_{\FF\cup \GG}G$ of the desired dimension.
\end{proof}

\begin{lemma}\label{secondpushout}
Given $r <n$, suppose there exists a $d \geq n$ such that $\gd_{\FF_{r-1}} G \leq d$ and that for all maximal subgroups $N$ with $r_0(N) >r-1$ we also have $\gd_{\FF_{r-1}\cap\All(N)} G \leq d.$ Then
$$\gd_{\FF_r}G \leq d+1 \quad \mbox{ and } \quad \gd_{\FF_r\cap\All(M)}G \leq d+1$$
for all maximal subgroups $M$ of $r_0(M)> r.$
\end{lemma}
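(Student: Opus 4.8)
The plan is to apply the L\"uck--Weiermann machinery of Theorem \ref{lw-main} to the inclusion of families $\FF_{r-1}\subseteq\FF_r$, using the strong equivalence relation $\sim$ defined above (where $H\sim K$ iff $\mathrm{rk}(H\cap K)=r$). First I would identify the relevant data: by Lemma \ref{unimax} each class $[H]$ in $[\FF_r\bs\FF_{r-1}]$ has a unique maximal representative $H_{\max}$, and since $G$ is abelian we have $N_G[H]=G$ for every $[H]$ (conjugation is trivial), so the push-out of Theorem \ref{lw-main} simplifies considerably: there are no induced spaces $G\times_{N_G[H]}(-)$ to worry about, just $E_{\FF_{r-1}}(G)$, a disjoint union of copies of $E_{\FF_{r-1}\cap\All(H_{\max})}(G)$, and a disjoint union of copies of $E_{\FF_r[H]}(G)=E_{\All(H_{\max})}(G)$. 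The key identification is that $\FF_r[H]$, the family of subgroups of $G$ that are either in $\FF_{r-1}$ or equivalent to $H$, is exactly $\All(H_{\max})$: indeed any subgroup equivalent to $H$ is a finite-index subgroup of $H_{\max}$, hence contained in $H_{\max}$, and conversely $\FF_{r-1}\cap G$ together with the subgroups of $H_{\max}$ of rank exactly $r$ exhausts $\All(H_{\max})$ (a subgroup of $H_{\max}$ of rank $r$ is automatically finite index in $H_{\max}$ and hence $\sim H_{\max}\sim H$).

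Next I would feed the dimension hypotheses into Corollary \ref{lw-dim} (or rather the mapping-cylinder reformulation, since we need not control which maps are inclusions). We are given $\gd_{\FF_{r-1}}G\leq d$, and by Lemma \ref{eallm} each $E_{\All(H_{\max})}(G)$ has a model of dimension $n-r\leq n\leq d$. For the "intersection" spaces $E_{\FF_{r-1}\cap\All(H_{\max})}(G)$: the maximal subgroups $H_{\max}$ arising here have $r_0(H_{\max})=r>r-1$, so the hypothesis "$\gd_{\FF_{r-1}\cap\All(N)}G\leq d$ for all maximal $N$ with $r_0(N)>r-1$" applies and gives a $d$-dimensional model. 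Corollary \ref{lw-dim} then produces a $(d+1)$-dimensional model for $E_{\FF_r}G$ — note the dimension only goes up by one because the "lower" leg of the push-out (the intersection spaces) sits in dimension $d$, and taking the push-out/double mapping cylinder of a $d$-dimensional space mapping into $d$-dimensional spaces costs at most one dimension. Hence $\gd_{\FF_r}G\leq d+1$.

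For the second assertion, let $M$ be a maximal subgroup with $r_0(M)>r$. Then $\FF_r\cap\All(M)$ is the family of subgroups of $M$ of torsion-free rank $\leq r$, i.e.\ it is "$\FF_r$ computed inside $M$", and since $M$ is again a finitely generated abelian group of rank $r_0(M)>r$ the same argument applies verbatim with $M$ in place of $G$: one needs $\gd_{\FF_{r-1}\cap\All(M)}M\leq d$ (which, viewing everything as families of subgroups of $M$, is the hypothesis on the maximal subgroup $M$ of $G$, after noting that classifying spaces for a family of subgroups of $M$ do not change under restriction of the ambient group) and $\gd_{\FF_{r-1}\cap\All(N)\cap\All(M)}M\leq d$ for maximal subgroups $N\cap M$ of $M$ of rank $>r-1$, which again follows from the hypothesis since such subgroups are of the form $\All(N')$ for a maximal $N'\leq M$ (equivalently $\leq G$) of rank $>r-1$. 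Running the push-out inside $M$ then yields $\gd_{\FF_r\cap\All(M)}G=\gd_{\FF_r\cap\All(M)}M\leq d+1$.

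The main obstacle I anticipate is bookkeeping rather than anything deep: one must carefully check the identification $\FF_r[H]=\All(H_{\max})$ and, in the second part, that "$\FF_r\cap\All(M)$ for subgroups of $G$" really coincides with "$\FF_r$ for subgroups of $M$" so that the inductive hypotheses transfer cleanly — in particular that $\gd$ of a family consisting of subgroups of $M$ is computed the same whether one regards the ambient group as $M$ or as $G$ (which holds because $E_{\mathcal{E}}M$ with trivial $G/M$-action, or the relevant restriction, is a model on the $G$-side). Checking that the maximal subgroups of $M$ of rank $>r-1$ are among the maximal subgroups of $G$ of rank $>r-1$ intersected with $M$, so that Lemma \ref{eallm}-type and hypothesis inputs apply, is the last routine point. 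No genuinely new idea beyond Theorem \ref{lw-main} and Lemmas \ref{unimax}, \ref{eallm} is needed.
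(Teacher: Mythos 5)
Your overall strategy --- the L\"uck--Weiermann push-out over the (unique) maximal elements, fed into Corollary \ref{lw-dim} --- is the same as the paper's, but two of your identifications are wrong, and one of them breaks the second half of the argument.

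First, the family $\FF_r[H]$ attached to a class $[H]$ with maximal element $N=H_{\max}$ is \emph{not} $\All(N)$: by Definition \ref{equiv} it contains all of $\FF_{r-1}\cap N_G[H]=\FF_{r-1}$ (every subgroup of $G$ of rank $\leq r-1$, not only those inside $N$), so the correct identification is $\FF_r[H]=\FF_{r-1}\cup\All(N)$. Likewise the top-left corner of the push-out is $E_{\FF_{r-1}\cap N_G[H]}(N_G[H])=E_{\FF_{r-1}}(G)$, not $E_{\FF_{r-1}\cap\All(N)}(G)$. Your ``conversely'' only establishes the inclusion $\All(N)\subseteq\FF_r[H]$; the reverse inclusion fails whenever $r<n$. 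Because of this, the paper needs a step you omit: producing a $(d+1)$-dimensional model for $E_{\FF_{r-1}\cup\All(N)}(G)$ via the double-mapping-cylinder Lemma \ref{federicovictor}, using $\gd_{\FF_{r-1}}G\leq d$, $\gd_{\All(N)}G=n-r_0(N)<n\leq d$ (Lemma \ref{eallm}) and the hypothesis $\gd_{\FF_{r-1}\cap\All(N)}G\leq d$. Your final count of $d+1$ agrees, but only because you fed the same hypotheses into the wrong corners of the diagram; as written the push-out you describe is not the one Theorem \ref{lw-main} provides.

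Second, and more seriously, your treatment of $\gd_{\FF_r\cap\All(M)}G$ rests on the claim that for a family $\mathcal{E}$ of subgroups of $M$ one has $\gd_{\mathcal{E}}G=\gd_{\mathcal{E}}M$, ``because $E_{\mathcal{E}}M$ with trivial $G/M$-action is a model on the $G$-side''. This is false: a $G$-CW-model for $E_{\mathcal{E}}G$ must satisfy $X^H=\emptyset$ for $H\notin\mathcal{E}$, which a space with trivial $G/M$-action cannot do. Concretely, $\gd_{\All(M)}M=0$ while $\gd_{\All(M)}G=n-r_0(M)>0$ by Lemma \ref{eallm}. Hence proving the bound for the $M$-space does not yield the asserted bound for the $G$-space $E_{\FF_r\cap\All(M)}G$, and ``running the argument inside $M$'' does not go through verbatim. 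The paper avoids this by running the same push-out at the level of $G$ for the pair of families $\FF_{r-1}\cap\All(M)\subseteq\FF_r\cap\All(M)$, using the identity $(\FF_{r-1}\cap\All(M))\cap\All(N)=\FF_{r-1}\cap\All(N)$ for $N\leq M$ together with Lemma \ref{federicovictor} again; you would need to redo the second half along these lines.
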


\begin{proof}
We begin by applying Theorem \ref{lw-main} to the families $\GG=\FF_r$ and $\FF=\FF_{r-1}.$
Lemma \ref{unimax} implies that $G$ satisfies $(M_{\FF_{r-1} \subseteq \FF_r}).$
Denote by $\mathcal{N}$ the set of equivalence classes of  maximal elements in $\FF_r\backslash \FF_{r-1}$. Then
 \cite[Corollary 2.8]{lueckweiermann} gives   a push-out:
 
$$\xymatrix{\sqcup_{N \in \mathcal{N}} E_{\FF_{r-1}}(G)  \ar[r] \ar[d]  & E_{\FF_{r-1}}(G) \ar[d] \\
\sqcup_{N \in \mathcal{N}} E_{\FF_{r-1} \cup\All(N)}(G) \ar[r]  &Y,}$$
and $Y$ is a model for $E_{\FF_r}G.$ 

By assumption we have that $\gd_{\FF_{r-1}}G \leq d$ and $\gd_{\FF_{r-1}\cap\All(N)}G\leq d$ for all $N \in \mathcal{N}.$ Furthermore, by Lemma \ref{eallm} we have that $\gd_{\All(N)}G = n-r_0(N) < n.$ Lemma \ref{federicovictor} now implies that $\gd_{\FF_{r-1}\cup \All(N)}G\leq d+1$.
Applying  Corollary \ref{lw-dim}  to the above push-out yields
$$\gd_{\FF_r}G\leq d+1.$$

The second claim is proved similarly applying Theorem \ref{lw-main} to the families $\GG={\FF_r\cap\All(M)}$ and $\FF={\FF_{r-1}\cap\All(M)}.$
The argument of Lemma \ref{unimax} applies here as well and hence $G$ satisfies $(M_{(\FF_{r-1}\cap\All(M)) \subseteq (\FF_r\cap\All(M))}).$
We denote by $\mathcal{N}(M)$ the set of equivalence classes of
maximal elements in $\FF_r\cap\All(M)\backslash\FF_{r-1}\cap\All(M).$ this now gives us a push-out:
$$\xymatrix{\sqcup_{N \in \mathcal{N}(M)} E_{\FF_{r-1}\cap\All(M)}(G)  \ar[r] \ar[d]  & E_{\FF_{r-1}\cap\All(M)}(G) \ar[d] \\
\sqcup_{N \in \mathcal{N}(M)} E_{(\FF_{r-1}\cap\All(M)) \cup\All(N)}(G) \ar[r]  &Z,}$$
and $Z$ is a model for $E_{\FF_r\cap\All(M)}G.$ 

Since $N\leq M$, it follows that $(\FF_{r-1}\cap\All(M)) \cap\All(N) = \FF_{r-1}\cap\All(N)$ and hence, by assumption $\gd_{(\FF_{r-1}\cap\All(M)) \cap\All(N)} G \leq d$ and Lemma \ref{federicovictor} implies that $\gd_{(\FF_{r-1}\cap\All(M)) \cup\All(N)} G \leq d+1.$ Now the same argument as above applies and
$$\gd_{\FF_r\cap\All(M)}G \leq d+1.$$

\end{proof}

\bigskip\noindent{\bf Proof of Main Theorem:} 
We begin by noting that for $r=0$ we have that $\FF_r=\FF_0$ is the family of all finite subgroups of $G$. Then for all 
maximal subgroups $M$ of rank $1$, we have that $\FF_0=\FF_0\cap\All(M).$ Furthermore, is is well known that $\gd_{\FF_0}G =n$, see for example  \cite{luecksurvey}.

Now an induction using Lemma \ref{secondpushout} yields the claim.

\qed

\begin{question} Is the bound of our Main Theorem sharp, i.e. for $n> r,$ is $$\gd_{\FF_r}G=n+r?$$
\end{question}

Since $\gd_{\FF_0}G = \gd_{\FF_0\cap\All(N)}G =n$ for all maximal subgroups $N$, we can assume equality in the inductive step (assumptions of Lemma \ref{secondpushout}). Then a successive application of the Mayer-Vietoris sequences to the push-outs in Lemmas \ref{secondpushout} and \ref{federicovictor}, reduces the question to whether the map
$$H^d(G\backslash E_{\FF_{r-1}}G) \to H^d(G\backslash E_{\FF_{r-1}\cap\All(N)}G)$$
is surjective or not.

\bigskip\noindent
We know by \cite{lueckweiermann} that $\gd_{\FF_1}G=n+1$ and it was shown in \cite{ana} that the question has a positive answer for $G=\Z^3,$ i.e. that $\gd_{\FF_2}(\Z^3)=5.$

\bigskip

\section*{References}

\begin{biblist}

    \bib{degrijsepetrosyan}{article}{
    AUTHOR = {Degrijse, Dieter}
    author={ Petrosyan, Nansen},
     TITLE = {Geometric dimension of groups for the family of virtually
              cyclic subgroups},
   JOURNAL = {J. Topol.},
  FJOURNAL = {Journal of Topology},
    VOLUME = {7},
      YEAR = {2014},
    NUMBER = {3},
     PAGES = {697--726},
      ISSN = {1753-8416},
   MRCLASS = {20F65},
  MRNUMBER = {3252961},
       DOI = {10.1112/jtopol/jtt045},
       URL = {http://dx.doi.org/10.1112/jtopol/jtt045},
}

 \bib{ana}{article}{
    author={Lopes-Onorio, A.},
    title={Ph.D thesis, University of Southampton},
    date={in preparation},
        }

    \bib{luecksurvey}{incollection}{
    author={L{\"u}ck, W.},
    title={Survey on classifying spaces for families of subgroups},
    date={2005},
    booktitle={Infinite groups: geometric, combinatorial and
dynamical aspects},
    series={Progr. Math.},
    volume={248},
    publisher={Birkh\"auser},
    address={Basel},
    pages={269\ndash 322},
    review={\MR{MR2195456 (2006m:55036)}},
    }

   \bib{lueckalgtop}{book}{
    author={L{\"u}ck, W.},
    title={Algebraische Topologie. Homologie und Mannigfaltigkeiten},
    series={Aufbaukurs Mathematik},
    publisher={Vieweg Studium},
    address={Wiesbaden},
    date={2005},
    ISBN={3-528-03218-9},
    }

    \bib{lueckweiermann}{article}{
    author={L{\"u}ck, W.},
    author={Weiermann, M.},
    title={On the classifying space of the family of virtually cyclic
    subgroups},
    date={2012},
    journal={Pure App. Math. Q.},
    volume={8},
    number={2},
    pages={479\ndash 555},
    url={http://arxiv.org/abs/math/0702646v2},
    }

 \bib{segal}{book}{
    AUTHOR = {Segal, Daniel},
     TITLE = {Polycyclic groups},
    SERIES = {Cambridge Tracts in Mathematics},
    VOLUME = {82},
 PUBLISHER = {Cambridge University Press, Cambridge},
      YEAR = {1983},
     PAGES = {xiv+289},
      ISBN = {0-521-24146-4},
   MRCLASS = {20-02 (20F16)},
  MRNUMBER = {713786 (85h:20003)},
MRREVIEWER = {John F. Bowers},
       DOI = {10.1017/CBO9780511565953},
       URL = {http://dx.doi.org/10.1017/CBO9780511565953},
} 
      
  \end{biblist}
\end{document}